\newtheorem{prevtheorem}{Theorem}
\newtheorem{theorem}{Theorem}
\newtheorem{lemma}[theorem]{Lemma}
\theoremstyle{definition}
\theoremstyle{remark}
\newcommand{\F}{\textbf{F}}
\title{Word Maps in Finite Simple Groups}
\date{\today \hspace{1mm} This material is based upon work done while the first author was supported by the National Science Foundation under Grant No. DMS-1502553.}
\author{William Cocke}
\address{William Cocke\\Department of Mathematics, University of Wisconsin-Madison}
\email{cocke@math.wisc.edu}
\author{Meng-Che ``Turbo'' Ho}
\address{Meng-Che ``Turbo'' Ho\\Department of Mathematics, Purdue University}
\email{ho140@purdue.edu}
\begin{document}

\begin{abstract}
Elements of the free group define interesting maps, known as word maps, on groups. It was previously observed by Lubotzky that every subset of a finite simple group that is closed under endomorphisms occurs as the image of some word map. We improve upon this result by showing that the word in question can be chosen to be in any $v(\textbf{F}_n)$ provided that $v$ is not a law on the finite simple group in question. In addition, we provide an example of a word $w$ that witnesses the chirality of the Mathieu group $M_{11}$. The paper concludes by demonstrating that not every subset of a group closed under endomorphisms occurs as the image of a word map.  
\end{abstract}

\maketitle

\section{Introduction}

The image of various word maps in finite simple groups has been a topic of considerable interest. Most famously the now-proved Ore conjecture asked whether every element of a finite nonabelian simple group $G$ is a commutator \cite{LOST}. Recently, for any finite nonabelian simple group $G$, it was shown that if $N$ is the product of two prime powers, then every element of $G$ occurs as the product of two $N$-powers in $G$ \cite{GLOST}. 

A word $w$ is an element of the free group $w\in \textbf{F}_n=\textbf{F}\langle x_1,\dots,x_n \rangle$. For any group $G$, the word $w$ induces a map $w:G^n\rightarrow G$, where \[ (g_1,\dots,g_n) \rightarrow w(g_1,\dots,g_n).\] We write $w$ for both the word $w$ and the word map on $G$, and write $w(G)$ to mean the image $w(G^n)$ of the word map. In general $w(G)$ is not a group, but merely a subset of $G$. We write $\langle w(G) \rangle$ for the group generated by $w(G)$. We will also write $\overline{g}$ to mean the tuple $(g_1,g_2,\cdots,g_n)$. In the notation now established, the Ore conjecture asked whether for every finite simple group $G$, the word $w=[x,y]=x^{-1} y^{-1} x y$ satisfies $w(G)=G$. 

Word maps represent an interesting class of functions on groups. In general, word maps are not homomorphisms, but they do respect automorphisms and endomorphisms of groups. Explicitly we have the following lemma.

\begin{lemma}
Let $w \in \textbf{F}_n$ and $G$ be a group. Then for any $\overline{g}\in G$ and $\varphi:G\rightarrow G$ the following holds. 
\[
\varphi\left(w(\overline{g})\right) = w\left(\overline{\varphi(g)} \right).
\]
\end{lemma}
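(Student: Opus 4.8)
The plan is to reduce everything to the universal property of the free group $\textbf{F}_n$. First I would fix the tuple $\overline{g}\in G^n$ and recall that the word map is really evaluation: there is a unique homomorphism $\mathrm{ev}_{\overline{g}}\colon \textbf{F}_n\to G$ with $\mathrm{ev}_{\overline{g}}(x_i)=g_i$ for every $i$, and by the definition of $w(\overline{g})$ we have $w(\overline{g})=\mathrm{ev}_{\overline{g}}(w)$. The same remark applied to the tuple $\overline{\varphi(g)}=(\varphi(g_1),\dots,\varphi(g_n))$ produces a homomorphism $\mathrm{ev}_{\overline{\varphi(g)}}\colon \textbf{F}_n\to G$ with $w(\overline{\varphi(g)})=\mathrm{ev}_{\overline{\varphi(g)}}(w)$.

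Next I would observe that, since $\varphi$ is an endomorphism of $G$, the composite $\varphi\circ\mathrm{ev}_{\overline{g}}\colon \textbf{F}_n\to G$ is again a homomorphism, and it sends each generator $x_i$ to $\varphi(g_i)$. Hence $\varphi\circ\mathrm{ev}_{\overline{g}}$ and $\mathrm{ev}_{\overline{\varphi(g)}}$ are two homomorphisms out of the free group that agree on the free generating set $\{x_1,\dots,x_n\}$, so by the universal property they coincide as maps $\textbf{F}_n\to G$. Evaluating this equality of homomorphisms at the element $w\in\textbf{F}_n$ gives $\varphi(w(\overline{g}))=\varphi(\mathrm{ev}_{\overline{g}}(w))=\mathrm{ev}_{\overline{\varphi(g)}}(w)=w(\overline{\varphi(g)})$, which is exactly the asserted identity.

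If a more hands-on argument is preferred, the same statement follows by induction on the length of the reduced word representing $w$: the base cases $w=1$ and $w=x_i$ are immediate, and in the inductive step one writes $w=x_i^{\varepsilon}w'$ with $\varepsilon=\pm 1$ and $w'$ a shorter word, then applies that $\varphi$ preserves products together with $\varphi(h^{-1})=\varphi(h)^{-1}$ for all $h\in G$. There is no genuine obstacle here; the only point requiring care is the standing hypothesis that $\varphi$ is a homomorphism (endomorphism) rather than an arbitrary set map, since both arguments rest entirely on $\varphi$ respecting multiplication and inversion.
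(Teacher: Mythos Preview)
Your argument is correct: the universal property of $\textbf{F}_n$ identifies the two homomorphisms $\varphi\circ\mathrm{ev}_{\overline{g}}$ and $\mathrm{ev}_{\overline{\varphi(g)}}$ on the generators, hence everywhere, and evaluating at $w$ gives the claim; the alternative induction on word length is equally valid. The paper itself offers no proof of this lemma---it is stated as a standard fact and used immediately---so there is no approach to compare against; your write-up simply supplies the routine verification that the authors left to the reader.
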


Hence $w(G)$ is closed under all endomorphisms from $G$ to $G$. In examining word maps on finite simple groups, the question was asked at the conference `Words and Growth' (Jerusalem, June 2012) if every subset of a finite simple group that is closed under endomorphisms of $G$ occurs as the image of some word map. Lubotzky responded in the affirmative with the following theorem.

\begin{theorem}\cite{Lubotzky}
Let $G$ be a finite simple group, $n>1$, and let $A\subseteq G$ such that $A$ is closed under all endomorphisms of $G$, then there is a word $w\in \textbf{F}_n$ such that $A=w(G)$. 
\end{theorem}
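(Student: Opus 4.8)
The plan is to first unwind the hypothesis. In a finite simple group $G$ every endomorphism has kernel $1$ or $G$, hence is either an automorphism or the map sending everything to $1$; so ``$A$ closed under $\mathrm{End}(G)$'' says precisely that $A$ is a union of $\mathrm{Aut}(G)$-orbits, and --- since every word map sends $(1,\dots,1)$ to $1$, and $A$ must be nonempty for the conclusion to hold --- that $1\in A$. If $A=\{1\}$ one takes $w=x_1^{|G|}$, which is identically $1$ by Lagrange, so from now on I write $A=\{1\}\cup O_1\cup\dots\cup O_k$, with the $O_i$ the distinct nontrivial $\mathrm{Aut}(G)$-orbits contained in $A$, and I fix representatives $a_i\in O_i$.

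The engine will be a single building block: for each $a\in G$ produce a word $w_a$ whose image is exactly the endomorphism orbit $\{1\}\cup a^{\mathrm{Aut}(G)}$, and then assemble $w$ from $w_{a_1},\dots,w_{a_k}$. For the building block I would lean on three features of $G$. First, $G$ is $2$-generated, so $a=v(s,t)$ for a fixed generating pair $(s,t)$ and some $v\in\textbf{F}_2$. Second, a finite nonabelian simple group is never contained in the variety generated by its proper subgroups: otherwise, being a quotient of a subgroup of a direct product of proper subgroups, and since a finite simple group which is a section of a direct product of groups is already a section of one of the factors, $G$ would be a section of a single proper subgroup $H$, contradicting $|H|<|G|$; hence there is a nontrivial word $c$ that is a law on every proper subgroup of $G$ but not on $G$. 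Third, the image of any word map is $\mathrm{Aut}(G)$-invariant and contains $1$. Using the second point (inserting $c$ in the right places, and forcing every variable's exponent-sum to be divisible by $\exp(G)$) one arranges that every input tuple which fails to generate $G$ is sent to $1$; using the first and third points one arranges that the tuples which \emph{do} generate $G$ sweep out precisely $a^{\mathrm{Aut}(G)}$. Given the building blocks, one gets $A$ itself either by a multi-orbit version of the same construction or by a union step: combining the $w_{a_i}$ on disjoint sets of variables should produce a word with image $(\{1\}\cup O_1)\cup\dots\cup(\{1\}\cup O_k)=A$. Finally, reading a word of $\textbf{F}_m$ inside $\textbf{F}_n$ for $n>m$ (ignoring the spare variables) --- and noting that the construction genuinely uses $n\ge 2$, since for $n=1$ the only word-map images are sets of $e$-th powers --- yields the statement for every $n>1$.

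The hard part is making the image come out \emph{exactly} $A$, and the difficulty has two faces. In the building block the collapsing word $c$ must coexist with the demand that the generic (surjective-homomorphism) values of the whole word form a single $\mathrm{Aut}(G)$-orbit: words that vanish on all proper subgroups (iterated commutators, derived words) tend to be \emph{surjective} on $G$, whereas words with genuinely small image (power words) are not laws on proper subgroups, so securing both at once forces a delicate interleaving of a collapsing word, power words, and conjugates of $v$, plus a verification that no stray values enter. In the assembly step, the naive product $w_{a_1}\cdots w_{a_k}$ has image $(\{1\}\cup O_1)\cdots(\{1\}\cup O_k)$, which contains the spurious products $O_iO_j$; excluding these --- for instance by arranging the blocks to interact only through elements already forced into $A$ --- is a genuine point rather than a formality. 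I expect this bookkeeping, rather than any single slick identity, to be where the substance of the proof lies.
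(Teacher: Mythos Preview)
Your setup is correct: closure under endomorphisms reduces to $1\in A$ together with $A$ being a union of $\operatorname{Aut}(G)$-orbits. Your key ingredient---that a finite nonabelian simple $G$ does not lie in the variety generated by its proper subgroups---is also exactly the right one. But the two difficulties you flag at the end are not bookkeeping; they are the whole problem, and your outline does not resolve either. For the building block, the collapsing word $c$ only guarantees vanishing on non-generating tuples; on the $d(n)$ distinct $\operatorname{Aut}(G)$-orbits of \emph{generating} $n$-tuples your word can (and typically will) take values in many different automorphism classes, and nothing in your construction forces the image down to a \emph{single} nontrivial orbit. For the assembly, the product on disjoint variables has image equal to the setwise product, and you offer no mechanism to kill the cross-terms $O_iO_j$.

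The paper (proving its Theorem~A, which specialises to Lubotzky's statement by taking $v=x_1$) sidesteps both issues by working not with individual words but with the group of all $n$-variable word maps, the relatively free group $\textbf{F}_n(G)=\textbf{F}_n/K(G)$ in the variety generated by $G$. Your variety observation is precisely what yields the splitting $\textbf{F}_n(G)\cong G^{d(n)}\times H$, where $d(n)$ counts $\operatorname{Aut}(G)$-orbits of generating $n$-tuples and $H$ is assembled from proper subgroups. A word map is then simply an element of this direct product, and its $G^{d(n)}$-coordinate records its values on the generating-tuple orbits \emph{independently}. The Guralnick--Kantor theorem (every nontrivial element of $G$ lies in a generating pair) gives $d(n)\ge$ the number of nontrivial $\operatorname{Aut}(G)$-classes in $G$, so one may pick the element $(a_1,\dots,a_k,1,\dots,1;\,1)\in G^{d(n)}\times H$; the corresponding word has image exactly $\{1\}\cup O_1\cup\dots\cup O_k=A$. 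There is no separate building block and no assembly step: the direct-product structure lets a single word hit all required orbits at once while vanishing elsewhere, dissolving both of your obstacles simultaneously.
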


In the current work we do two things: we extend Lubotzky's result by showing that the structure of $w$ realizing $A$ can be controlled in a very strong way; we also show that there are groups $G$ and $A\subset G$ with $A$ closed under endomorphisms such that $A$ is not $w(G)$ for any $w$. As part of our generalization of Lubotzky's result, we provide a word $w$ so that over $G=M_{11}$, we have that $w(G)$ is exactly one of the conjugacy classes with representative of order $11$. The explicit realization of such a word provides a quick proof of observations by Gordeev et.\ al.\ \cite[7.3-7.4]{Gordeev}.

\begin{prevtheorem}\label{main}
Let $G$ be a finite simple group, $n>1$, and $A\subseteq G$ such that $A$ is closed under automorphisms and $1\in A$. Assume that $v\in \textbf{F}_n$ is not a law on $G$. Then there is a word $w\in  \langle v(\textbf{F}_n)\rangle$ such that $A=w(G)$. 
\end{prevtheorem}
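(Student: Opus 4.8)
\medskip
\noindent\textbf{Proof plan.} The strategy is to deduce the statement from Lubotzky's theorem together with a mechanism that forces the realizing word into the verbal subgroup $V:=\langle v(\textbf{F}_n)\rangle$. For a finite simple group $G$ every endomorphism is either trivial or an automorphism, so a nonempty subset of $G$ is closed under all endomorphisms exactly when it is closed under $\operatorname{Aut}(G)$ and contains $1$; thus the hypothesis on $A$ is precisely that of Lubotzky's theorem, which already supplies \emph{some} $w_0\in\textbf{F}_n$ (indeed $w_0\in\textbf{F}_2$) with $w_0(G)=A$. What remains is to relocate such a word inside $V$, and the plan rests on the idea that $V$ is rich enough to simulate an arbitrary word map once it contains a single surjective word.

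\textit{Step 1: a surjective word in $V$.} Since $v$ is not a law on $G$, choose $\overline{a}$ with $c:=v(\overline{a})\neq 1$. As $G$ is simple the normal closure of $c$ is all of $G$, and since $t^{-1}=t^{\exp(G)-1}$ in any finite group there is a bound $m=m(G)$ with the property that every element of $G$ is a product of at most $m$ conjugates of $c$ (no inverses needed). Conjugating copies of the word $v$ and multiplying them, while using that $G$ is $2$-generated and $n\geq 2$ to hold the number of variables down to $n$, produces $\phi\in V$ with $\phi(G)=G$. For $v=[x_1,x_2]$ this step is Ore's theorem \cite{LOST}, for $v=x_1^{N}$ with $N$ a prime power it is \cite{GLOST}, and in general it is a counting argument about products of values of $v$.

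\textit{Step 2: realizing $A$ inside $V$.} Since $\phi\in V$, every value of $\phi$ is a product of values of $v$, so $\langle\phi(\textbf{F}_n)\rangle\subseteq V$; it therefore suffices to realize $A$ by a word in $\langle\phi(\textbf{F}_n)\rangle$, and we may as well assume $v(G)=G$. If $A=\{h^{N}:h\in G\}$ for some $N$ we finish at once, as $w:=v^{N}\in V$ has $w(G)=\{v(\overline{g})^{N}:\overline{g}\in G^{n}\}=\{h^{N}:h\in G\}=A$. For a general $A=\{1\}\cup O_1\cup\cdots\cup O_k$, a union of $\operatorname{Aut}(G)$-orbits, one runs Lubotzky's construction --- which builds $w$ from power words, conjugations, and the refined words that isolate a single orbit, separate orbits of equal order, and assemble unions of orbits without the image overshooting --- substituting values of $v$ in place of free variables throughout, so as to bring the resulting word into $V$. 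The explicit word over $M_{11}$ whose image is one order-$11$ class is precisely a piece of this refined, non-power type --- no power map on $M_{11}$ has image $\{1\}\cup 11A$ --- and producing it yields the quick proof of \cite[7.3--7.4]{Gordeev}.

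\textit{Main obstacle.} The hard inclusion everywhere is $w(G)\subseteq A$: keeping the image from overshooting $A$ while at the same time respecting ``$w\in V$'' and ``$w$ uses only $n$ variables''. The two places where real work is needed are (i) the variable economy in Step 1 --- obtaining a surjective word map $G^{n}\to G$ that lies in $V$, rather than the easy $G^{nm}\to G$ --- and (ii) checking that the refined (non-power) part of Lubotzky's construction for an arbitrary $A$, in particular the words that separate two $\operatorname{Aut}(G)$-orbits of equal order such as $11A$ and $11B$ in $M_{11}$, goes through inside $V$ without enlarging the images.
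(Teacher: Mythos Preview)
Your proposal is an outline that correctly isolates the two hard points --- keeping the variable count at $n$ and preventing the image from overshooting $A$ --- but does not resolve either one. The substitution manoeuvre in Step~2 (compose a Lubotzky word $w_0$ with disjoint copies of a surjective $\phi\in V$) naturally lands in $\langle v(\textbf{F}_{nk})\rangle$ for some $k>1$, not in $\langle v(\textbf{F}_n)\rangle$; bringing the variable count back down to $n$ while preserving the image would require knowing that sufficiently many independent word maps $G^n\to G$ exist, which is precisely the structural fact you are trying to circumvent. Your description of Lubotzky's argument as an explicit assembly from ``power words, conjugations, and refined words that isolate a single orbit'' also does not match that proof, which is non-constructive and already proceeds via the relatively free group.

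The paper's route is different and dissolves both obstacles at once. One works in the relatively free group $\textbf{F}_n(G)=\textbf{F}_n/K(G)$ and uses the decomposition $\textbf{F}_n(G)\cong G^{d(n)}\times H$ (Lemma~\ref{product_lemma}). Since $v$ is not a law on $G$ one has $\langle v(G)\rangle=G$, whence $\langle v(\textbf{F}_n(G))\rangle=G^{d(n)}\times\langle v(H)\rangle$. Thus the \emph{entire} factor $G^{d(n)}$ already sits inside the verbal subgroup at the level of word maps on $G$, and one may directly choose an element $w\in\langle v(\textbf{F}_n(G))\rangle$ whose $G^{d(n)}$-component hits a representative of each $\operatorname{Aut}(G)$-class contained in $A$ on distinct generating orbits and is trivial on the remaining orbits, and whose $H$-component is $1$. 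Kantor--Guralnick guarantees that $d(n)$ exceeds the number of $\operatorname{Aut}(G)$-classes, so there are enough generating orbits; this yields $w(G)=A$ with no overshoot and no variable inflation. Finally $w$ is lifted from $\textbf{F}_n(G)$ back to $\langle v(\textbf{F}_n)\rangle$ by expressing it as a product of substituted instances of $v$. Your Step~1 is not needed in this argument, and carrying it out rigorously in only $n$ variables would itself require the decomposition above.
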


We note Theorem \ref{main} shows that any subset of $G$ that is closed under endomorphisms of $G$ can occurs as the image of a word map $w$ in $v(\textbf{F}_n)$, it does not provide a description of $w$. However, it is possible in some cases to explicitly find $w$. We will show the following theorem which relates to the authors' earlier work on the chirality of groups \cite{CH}.

The Mathieu Group $M_{11}$ has two conjugacy classes of order 11 that are the inverse of each other. We construct a word whose image picks out exactly one of the conjugacy classes. Furthermore, although the word is long in word length, it is short as a straight-line program.

\begin{prevtheorem}\label{example}
Let $G$ be the Mathieu Group $M_{11}$ and let w be the word \[ [x^{-440}(x^{-440})^{(y^{-440})}x^{-440},(y^{-440})^{(x^{-440}y^{-440})}y^{-440}].\] Then $w(G)$ contains an element $g$ such that $o(g)=11$ and $g^{-1} \notin w(G)$, i.e., the word $w$ witnesses the chirality of $G$. 
\end{prevtheorem}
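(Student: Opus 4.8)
The plan is to verify Theorem~\ref{example} by a direct computation in $M_{11}$, organized so that the structure of the word $w$ does the work. First I would observe that $w$ is built from the single "building block" $z \mapsto z^{-440}$ applied to the generators $x$ and $y$, and that conjugation and commutator brackets are being used only to combine these blocks; so as a straight-line program $w$ has only a handful of lines even though its word length is enormous. The key number-theoretic fact to record is the behaviour of the exponent $-440$ on elements of $M_{11}$: since $|M_{11}| = 7920 = 2^4\cdot 3^2\cdot 5\cdot 11$, the element orders in $M_{11}$ are $1,2,3,4,5,6,8,11$, and raising to the $-440$ power kills every element whose order divides $\gcd(440,\cdot)$ appropriately while acting as a power-map on the rest. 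In particular $440 = 8\cdot 5\cdot 11$, so $g^{-440} = 1$ whenever $o(g) \in \{1,2,4,5,8,11\}$ and a suitable nontrivial power when $o(g) \in \{3,6\}$; this is the mechanism by which $w$ can land inside a single conjugacy class.

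Next I would turn the chirality claim into the concrete existence statement it really is: I must exhibit a pair $(a,b) \in G^2$ with $w(a,b)$ of order $11$, and then show that no pair $(c,d)\in G^2$ has $w(c,d)$ conjugate to $w(a,b)^{-1}$. For the first part one simply produces explicit permutations $a,b$ (in the natural degree-$11$ action of $M_{11}$), computes $a^{-440}, b^{-440}$, and then the nested conjugates and the outer commutator, and checks the order of the result is $11$ — a finite check. For the second part, the clean route is to use the endomorphism-invariance Lemma~1 together with the fact that $M_{11}$ has trivial outer automorphism group: the two classes $11A$ and $11B = 11A^{-1}$ are not fused by any automorphism of $M_{11}$, so $w(G)$, being a union of $\mathrm{Aut}(G)$-invariant subsets, either contains all of $11A\cup 11B$ or meets it in a union of full classes; hence it suffices to show $w(G)$ contains an element of $11A$ but $w(G) \cap 11B = \emptyset$ (after fixing which class is called $11A$). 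Because the image of any map on the finite set $G^2$ is itself finite and computable, the statement "$w(G)$ contains no element of order $11$ whose class is $11B$" is in principle a finite verification; I would argue that the special form of $w$ reduces this to checking only finitely many "reduced" input pairs, namely one pair of representatives for each pair of conjugacy classes of the possible values of $x^{-440}$ and $y^{-440}$ up to simultaneous conjugacy.

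The main obstacle, and where the real content lies, is precisely that reduction: a priori one would have to range over all $7920^2$ pairs, and even with a computer the point of the theorem is that the word was \emph{designed} so that the relevant behaviour is forced. So the crucial lemma to prove is that $w(a,b)$ depends on $(a,b)$ only through the pair $(a^{-440}, b^{-440})$, and that as $(a,b)$ ranges over $G^2$ the pair $(a^{-440},b^{-440})$ ranges over a small explicitly identified set of pairs — essentially pairs of elements each lying in the cyclic subgroup generated by an element of order $3$, i.e.\ each of order $1$ or $3$. Granting this, the proof reduces to a tiny finite computation: enumerate the (few) simultaneous-conjugacy classes of pairs $(s,t)$ with $o(s), o(t)\in\{1,3\}$ in $M_{11}$, evaluate $[s\,s^{t}\,s,\ t^{st}\,t]$ on each, read off which ones have order $11$, and confirm that all of these lie in a single inverse-closed pair of classes and in fact in just one of the two — then invoke Lemma~1 and $\mathrm{Out}(M_{11})=1$ to conclude $g^{-1}\notin w(G)$. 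I would present the exponent bookkeeping ($440 = \mathrm{lcm}$ of the "killed" orders) and the $\mathrm{Out}(M_{11})=1$ input as the two conceptual ingredients, and relegate the evaluation of $w$ on the short list of pairs to an explicit table or a remark that it was carried out in GAP.
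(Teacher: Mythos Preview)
Your plan is correct and follows essentially the paper's own proof: reduce via the $-440$th-power map to pairs of elements of order dividing $3$, rewrite $w$ on such inputs as the short word $v=[x\,x^{y}\,x,\ y^{xy}\,y]$, and verify by computer that every order-$11$ value lies in a single conjugacy class (the paper simply loops over all $(a,b)$ with $o(a)=o(b)=3$ in Magma). Your further reduction to simultaneous-conjugacy orbits and the appeal to $\mathrm{Out}(M_{11})=1$ are harmless but unnecessary embellishments: closure of $w(G)$ under inner automorphisms already makes it a union of conjugacy classes, so once the computation shows only one of the two order-$11$ classes appears, the chirality follows.
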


In the case of a general group $G$, one might ask if being closed under endomorphisms of $G$ is a sufficient condition for a subset $A$ to be $w(G)$ for some $G$. We will show this is false in Section \ref{abelian}, even in the case of abelian groups. 

\begin{prevtheorem}\label{neg}
Let $G$ be the cyclic group of order $12$. Then 
\[ A=\{x^2: x \in G\} \cup \{ x^3: x \in G\},\] is closed under endomorphisms of $G$, but is not the image of any word map over $G$. 
\end{prevtheorem}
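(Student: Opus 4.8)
The plan is to make the set $A$ completely explicit and then exploit the fact that a word map on an abelian group factors through the abelianization of the free group, so that its image is always a subgroup.

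First I would compute $A$ directly. Writing $G$ additively as $\Z/12\Z$, the word $x^2$ becomes the endomorphism $x\mapsto 2x$, with image $\{0,2,4,6,8,10\}$, and $x^3$ becomes $x\mapsto 3x$, with image $\{0,3,6,9\}$. Hence $A=\{0,2,3,4,6,8,9,10\}$, which is exactly the complement in $\Z/12\Z$ of the group of units $(\Z/12\Z)^{\times}=\{1,5,7,11\}$. To see that $A$ is closed under endomorphisms, recall that every endomorphism of $\Z/12\Z$ is multiplication by some integer $k$; if $a$ is a multiple of $2$ (respectively of $3$), then so is $ka$, so $kA\subseteq A$, and hence $A$ is closed under all endomorphisms of $G$.

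The key step is the observation that for any abelian group $G$ and any $w\in \textbf{F}_n$, the image $w(G)$ is a subgroup of $G$. Indeed, since $G$ is abelian, the word map induced by $w$ depends only on the image $(a_1,\dots,a_n)$ of $w$ in the abelianization $\textbf{F}_n^{\mathrm{ab}}\cong \Z^n$, so that $w(\overline{g})=a_1g_1+\cdots+a_ng_n$. Therefore $w(G)=a_1G+\cdots+a_nG$ is a sum of subgroups of $G$ (each $a_iG$ being the image of the endomorphism ``multiplication by $a_i$''), hence itself a subgroup. In the particular case $G=\Z/12\Z$ one gets more precisely $w(G)=dG$ with $d=\gcd(a_1,\dots,a_n,12)$.

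Finally I would note that $A$ is not a subgroup of $\Z/12\Z$: for instance $2,3\in A$ but $2+3=5\notin A$ (equivalently, $|A|=8$ does not divide $12$). Combined with the previous paragraph, this shows that no word map over $\Z/12\Z$ can have image $A$, completing the proof. There is no serious obstacle in this argument; the only point requiring care is the reduction of $w$ to its abelianization, and this is exactly where the abelian hypothesis is used essentially — for nonabelian $G$ the image of a word map need not be a subgroup, so the phenomenon exhibited here is genuinely a feature of the abelian setting.
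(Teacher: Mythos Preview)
Your proof is correct and follows essentially the same approach as the paper: both reduce, via the abelianization of $\textbf{F}_n$, to the fact that a word map on an abelian group has image equal to a subgroup (equivalently, the image of a single power map $x\mapsto x^d$), and then verify that $A$ is not of this form. The only cosmetic difference is that the paper invokes Nielsen transformations to replace $w$ by an automorphic word $x_1^k c$ with $c\in\textbf{F}_n'$ and then enumerates the six power-map images in $\Z/12\Z$, whereas you argue directly that $w(G)=a_1G+\cdots+a_nG$ is a sum of subgroups and note that $|A|=8\nmid 12$.
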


\section{Proof of Theorems \ref{main} and \ref{example}}
Our proof of Theorem \ref{main} will rely on work done on the varieties of groups. Recall that a variety $\mathcal{B}$ is the class of all groups satisfying some set of laws $X$, i.e.\ a group $G$ is in $\mathcal{B}$ if and only if for every word $w\in X$ and every $n$-tuple $\overline{g}\in G$ we have that $w(\overline{g}) =1.$ For example, the variety of abelian groups is defined by the law $w=x^{-1} y^{-1} x y.$  Similarly, solvable groups with derived length $n$ or nilpotent groups of class $m$ are varieties. 

For a finite group $G$ there are only finitely many word maps on $G$. Moreover, if $w$ and $v$ are two word maps from $G^{n}\rightarrow G$, then $w\cdot v$ is a word map from $G^{n}\rightarrow G$ given by $(w\cdot v) (\overline{g}) = w(\overline{g})  v(\overline{g})$. Hence the set of word maps on $n$ variables over $G$ forms a group $\textbf{F}_n(G)$. We can equivalently define $\textbf{F}_n(G)$ as follows.

Let $K(G)$ be the set of all $n$-variable laws on $G$. Then $K(G)$ is a characteristic subgroup of $\textbf{F}_n$ and $\textbf{F}_n(G)= \textbf{F}_n/K(G)$. 

The group $\textbf{F}_n(G)$ is the free group of rank $n$ in the variety generated by $G$. In particular, any $n$-generated group in the variety generated by $G$ occurs as a quotient of $\textbf{F}_n(G)$. In H.\ Neumann's text \emph{Varieties of Groups}, it is observed that for a finite simple group $G$, we have 
\[
\textbf{F}_n(G) = G^{d(n)} \times \textbf{F}_n(H)
\]
where $H$ is the direct product of all proper subgroups of $G$ \cite[pg 141]{Ne67} and $d(n)$ is the number of orbits of $\operatorname{Aut}(G)$ acting on the generating $n$-tuples of $G$. However, since this occurs without proof, we will prove a slightly weaker statement below, which will be sufficient for our purposes. It is also the case that $G^{d(n)}$ is $n$-generated, but $G^{d(n)+1}$ is not \cite{Hall}. 

\begin{lemma} \label{product_lemma}
Let $G$ be a finite simple group. Then 
\[
\textbf{F}_n(G)=G^{d(n)} \times H,
\]
for some group $H$.
\end{lemma}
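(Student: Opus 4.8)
The plan is to split $\textbf{F}_n(G)=\textbf{F}_n/K(G)$ according to whether a homomorphism $\textbf{F}_n\to G$ is surjective, extracting the factor $G^{d(n)}$ from the surjections and the factor $H$ from the rest. Throughout I take $G$ to be non-abelian simple; this is the only case needed for Theorem~\ref{main} (when $G$ has prime order the theorem is elementary, since then $A$ is $\{1\}$ or $G$). Recall that $w\in\textbf{F}_n$ is a law on $G$ exactly when $\psi(w)=1$ for every $\psi\in\operatorname{Hom}(\textbf{F}_n,G)$, so $K(G)=\bigcap_\psi\ker\psi$. I would then set
\[
R=\bigcap\{\ker\psi:\psi\colon\textbf{F}_n\to G\text{ surjective}\},\qquad S=\bigcap\{\ker\psi:\psi\colon\textbf{F}_n\to G\text{ not surjective}\}.
\]
Then $K(G)=R\cap S$, so by Goursat's lemma $\textbf{F}_n(G)=\textbf{F}_n/(R\cap S)$ sits inside $\textbf{F}_n/R\times\textbf{F}_n/S$ as the fibre product over $\textbf{F}_n/RS$. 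Hence it suffices to prove (i) $\textbf{F}_n/R\cong G^{d(n)}$ and (ii) $RS=\textbf{F}_n$: for then the fibre product is the whole direct product and we may take $H=\textbf{F}_n/S$.

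For (i), I would first observe that two surjections $\textbf{F}_n\to G$ lying in the same $\operatorname{Aut}(G)$-orbit under post-composition have the same kernel, and conversely two surjections with equal kernel differ by post-composition with an automorphism of $G$; since $\operatorname{Aut}(G)$-orbits of surjections correspond to $\operatorname{Aut}(G)$-orbits of generating $n$-tuples (via $\psi\mapsto(\psi(x_1),\dots,\psi(x_n))$), there are exactly $d(n)$ distinct kernels. Choosing surjections $\phi_1,\dots,\phi_{d(n)}$ realizing these kernels, the map $\Phi=(\phi_1,\dots,\phi_{d(n)})\colon\textbf{F}_n\to G^{d(n)}$ has kernel $R$ and its image is a subdirect product of copies of $G$ (each $\phi_i$ being onto). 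By the classical description of subdirect products of non-abelian simple groups, if $\Phi$ were not onto then some two coordinates $i\neq j$ would be linked by an automorphism, i.e.\ $\phi_j=\alpha\circ\phi_i$ for some $\alpha\in\operatorname{Aut}(G)$; but then $\ker\phi_i=\ker\phi_j$, contradicting our choice. So $\Phi$ is onto and $\textbf{F}_n/R\cong G^{d(n)}$.

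For (ii), note that if $RS\neq\textbf{F}_n$, then $\textbf{F}_n/RS$ is a nontrivial common quotient of $\textbf{F}_n/R\cong G^{d(n)}$ and of $\textbf{F}_n/S$. Every nontrivial quotient of $G^{d(n)}$ is a power $G^m$ with $m\ge1$ (the normal subgroups of $G^{d(n)}$ being the subproducts of the factors), so $G$ is a quotient of $\textbf{F}_n/S$. But $\textbf{F}_n/S$ embeds into $\prod_{\psi\text{ not surjective}}\operatorname{im}\psi$, a product of proper subgroups of $G$, and a non-abelian simple group is never a section of a product of proper subgroups of itself: inducting on the number of factors, if $q\colon D\twoheadrightarrow G$ with $D\le M_1\times\cdots\times M_k$ and each $M_i$ proper, then $N:=D\cap(M_1\times1\times\cdots\times1)$ is normal in $D$, so $q(N)$ is $1$ or $G$; the latter is impossible since $|N|\le|M_1|<|G|$, so $q$ factors through $D/N\hookrightarrow M_2\times\cdots\times M_k$ and one finishes by induction (the case $k\le1$ being clear). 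This contradiction gives (ii), and together with (i) yields $\textbf{F}_n(G)\cong G^{d(n)}\times H$.

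I expect the main obstacle to be step (i): it rests on the classical structure theorem for subdirect products of copies of a non-abelian simple group (such a subdirect product is either the full power, or has two coordinates identified along an automorphism), together with the bookkeeping that identifies $d(n)$ with the number of distinct kernels of surjections $\textbf{F}_n\to G$. Once (i) is established, step (ii) — isolating the contribution $H$ of the non-surjective homomorphisms — and the final Goursat argument are routine.
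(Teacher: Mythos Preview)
Your argument is correct and rests on the same dichotomy the paper uses---separating the contribution of generating $n$-tuples (equivalently, surjective homomorphisms $\textbf{F}_n\to G$) from that of non-generating ones---but your execution is different and considerably more explicit. The paper embeds $\textbf{F}_n(G)$ into $G^{d(n)}\times K$ by evaluating word maps on $\operatorname{Aut}(G)$-orbit representatives, and then invokes the universal property of the relatively free group (together with the cited fact that $G^{d(n)}$ is $n$-generated) to conclude that $G^{d(n)}$ is a quotient; the actual splitting $\textbf{F}_n(G)\cong G^{d(n)}\times H$ is left implicit. Your Chinese-remainder formulation makes this splitting the heart of the matter: step~(i) replaces the appeal to Hall's result by a direct subdirect-product computation of $\textbf{F}_n/R$, and step~(ii) supplies exactly the missing ingredient, namely that $G$ cannot appear as a section of a finite product of its proper subgroups, so $R$ and $S$ are comaximal. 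Your restriction to the non-abelian case is also necessary and worth flagging, since for $G=\Z/p$ one has $d(n)=(p^n-1)/(p-1)>n$ for $n\ge 2$, and the lemma fails as stated. In short: same decomposition, but your route is self-contained and closes the gap the paper leaves open.
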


\begin{proof}
Since a word map $w$ respects endomorphisms of $G$, the map $w$ is defined by its value  on a set of representatives of the diagonal action of the automorphism groups of $G$ on $G^{n}$. Moreover the number of possible values of $w$ on an orbit representative $(\overline{g})$ is less than or equal to $|\langle \overline{g} \rangle |$, the size of the subgroup generated by the orbit.  There are exactly $d(g)$ orbits of $n$-tuples corresponding to $n$-tuples that generate $G$, and some number of other orbits. 

Therefore $\textbf{F}_n(G)$ is a subgroup of the direct product $G^{d(n)} \times K$ where $K$ is some direct product of proper subgroups of $G$. But, any group of rank $n$ that satisfies the same laws as $G$ occurs as a quotient of $\textbf{F}_n(G)$. Hence $G^{d(n)}$ must occur as a quotient of $\textbf{F}_n(G)$. 
\end{proof}

Before proving Theorem \ref{main}, we need the following lemma which follows from the work of Kantor and Guralnick, which depends heavily on the classification of finite simple groups \cite[Corollary p. 745]{KG}.

\begin{lemma}\cite{KG}
For every nontrivial element $g$ of a finite simple group $G$ there is an $h\in G$ such that $G=\langle g, h \rangle.$
\end{lemma}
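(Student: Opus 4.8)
The plan is to deduce this statement from the cited result of Guralnick and Kantor rather than to prove it from scratch, since the assertion that every nontrivial element of a finite simple group $G$ lies in a generating pair is exactly (a consequence of) their theorem on spread and $\tfrac{3}{2}$-generation. Concretely, Guralnick and Kantor prove that every finite simple group $G$ has the property that for each nontrivial $g \in G$ there exists $h \in G$ with $\langle g, h\rangle = G$ (this is the statement that $G$ has \emph{spread at least $1$}, or is \emph{$\tfrac{3}{2}$-generated}). So the first step is simply to quote \cite{KG} and observe that the Lemma is a verbatim restatement of their Corollary on p.~745.

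If one wanted a more self-contained route, I would organize the argument by the classification of finite simple groups, matching the structure of the Guralnick–Kantor paper: (i) for alternating groups $A_k$, exhibit for each nontrivial permutation a companion generator, using the classical fact that $A_k$ is generated by a $3$-cycle together with a suitable $k$- or $(k{-}1)$-cycle and handling small cases by hand; (ii) for groups of Lie type, use the fact that such $G$ is generated by a pair of elements, together with a counting/probabilistic argument — bounding $\sum_{M} |M|^2/|G|^2$ over maximal subgroups $M$ containing a fixed $g$ — to show that a random $h$ generates with $g$ with positive probability; (iii) dispatch the finitely many sporadic groups by direct verification (character-theoretic counting of generating pairs, or computer computation). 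The probabilistic bound in (ii) is where all the real work lies and is precisely the technical heart of \cite{KG}; reproducing it is out of scope, so the intended proof is the one-line citation.

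The main obstacle, were one to attempt a genuine proof, is exactly this uniform control over maximal subgroups of groups of Lie type of unbounded rank: one needs good bounds on the number and orders of maximal subgroups containing a prescribed element $g$, which requires Aschbacher's classification of maximal subgroups together with delicate estimates. Since the excerpt explicitly permits us to assume results stated earlier — and this Lemma is itself presented as a cited black box — the proof in the paper will, I expect, consist only of the reference to \cite{KG}.
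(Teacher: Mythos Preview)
Your proposal is correct and matches the paper exactly: the lemma is stated as a black-box citation of \cite[Corollary p.~745]{KG}, with no proof given beyond the attribution and the remark that it depends on the classification of finite simple groups. Your additional sketch of how a direct proof via CFSG would be organized is accurate but goes beyond what the paper does.
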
 In particular, it is the case that for any finite simple group $G$, the number $d(n)$ is greater than the number of conjugacy classes of $G$. 

\begin{proof}[\textbf{Proof of Theorem \ref{main}.}]
Since $v$ is not a law on $G$, we know that $\langle v(G)\rangle =G$. By Lemma \ref{product_lemma} we see that \[\langle v\left(\textbf{F}_n(G)\right) \rangle=\langle v(G^{d(n)})\rangle \times \langle v(H) \rangle = G^{d(n)} \times \langle v(H)\rangle ,\] for the appropriate group $H$. 

Hence there is a word map $w\in v(\left(\textbf{F}_n(G)\right)$ that is defined by its value on the generating tuples with a value from the group $G^{d(n)} \times v(H)$. Given $A$, we can write $A$ as a union of $m\leq d(n)$ automorphism classes. There is a $w$ so that on $m$ different orbits of generating tuples of $G$, the value of $w$ is one of the distinct automorphism classes in $A$ and $w$ vanishes elsewhere. 

Now we need to find a word in $\langle v(\F_n) \rangle$ such that it induces the word map $w$ on $G$. Consider the word maps induced by the words $x_1,\cdots,x_n$. These word maps generate $\F_n(G)$. Since $w\in \langle v(\F_n(G))\rangle$, we can write $w$ as product of elements of the form $v(u_1,\cdots,u_n)$ such that each $u_j$ is a product of $x_1,\cdots,x_n$. Consider this spelling of $w$ in $x_1,\cdots,x_n$ as an element in $\F_n = \F(x_1,\cdots,x_n)$, we get a word in $\langle v(\F_n)\rangle$ that induces the word map $w$ on $G$, which has image being $A$.
\end{proof}

We will find a commutator word below that realizes an interesting property of the Mathieu group of order 7920. 

In \cite{CH2} the authors were interested in finding chiral word maps $w$, i.e.\ for some group $G$ we have that $g\in w(G)$ but $g^{-1} \notin w(G)$. Gordeev et.\ al.\ call the pair $(G,w)$ a chiral pair \cite{Gordeev}. They note that for groups known to be chiral, it is not always easy to produce a chiral pair, or witness of the chirality. For example, the Mathieu group of order 7920, $G=M_{11}$, is chiral as a result of Lubotzky's theorem or Theorem \ref{main}, but the only bound one has on the length of a $w$ in a chiral pair $(G,w)$ is the number of word maps over $M_{11}$, somewhere around $1.7 \times 10^{244552995}$. Below, we provide a word $w$ that has length 9680 and witnesses the chirality of $G$.

We will now show that the word \[w=[x^{-440}(x^{-440})^{(y^{-440})}x^{-440},(y^{-440})^{(x^{-440}y^{-440})}y^{-440}]\] witnesses the chirality of the Mathieu Group $M_{11}$. 

\begin{proof}[\textbf{Proof of Theorem \ref{example}}.]
All elements of $M_{11}$ have order either $1,2,3,4,5,6,8,$ or $11$. For an element $g$ of $M_{11}$ we have 
\[
g^{-440} =
\begin{cases}
1 & \text{ if } o(g) \notin \{3,6\}, \\
g & \text{ if } o(g) = 3, \\
g^4 & \text{ if } o(g) = 6.
\end{cases}
\]
If $a\in G$ does not have order $3$ or $6$, then $w(a,b)=w(1,b)=1$ for all $a\in G$, since $w$ is a commutator. Similarly, if $b\in G$ does not have order $3$ or $6$, then $w(a,b)=w(a,1)=1$ for all $a\in G$.  Moreover, $w(a,b)=w(a^4,b^4)$ for all $a,b \in G$. Hence to determine $w(G)$ we need to determine $w(a,b)$ where both $a$ and $b$ have order 3. There are 93600 such tuples from $G$. Let $X=\{a \in G : o(a)=3.\}$

Let $v=[x(x^y)x,y^{(xy)}y]$. Using Magma it is easy to compute the value of $w$ on all $a,b \in X$, by noting that $w(a,b)=v(a,b)$ \cite{Magma}. Computing the value of $w(a,b)$ for all $a,b\in X$, there are elements of order 1,2,4,5,6, and 11. However, all of the elements of order 11 that occur in the image of $w$ are conjugate.  For $g\in M_{11}$ with $o(g)=11$ we have that $g^{-1}\notin g^G$. We conclude that $w$ witnesses the chirality of $M_{11}$. 
\end{proof}

We note that $w$ has length equal to 9680 =(440)(22), and is much shorter as a straight-line program.

\section{Proof of Theorem \ref{neg}}\label{abelian}
Recall that any word $w(x_1,\dots,x_n)$ can be written in the form 
\[
w=x_1^{k_1}\dots x_n^{k_n} v(x_1,\dots,x_n), \text{ where $v\in \textbf{F}_n'.$}
\]

By applying Nielson transformations to $w$, we see that $w$ is automorphic to a word $w'=x_1^k c$ where $k$ is $\textbf{gcd}(k_1,\dots,k_n)$ and $c\in \textbf{F}_n'$. Moreover, $w$ is a law on a group $G$ if and only if $w'$ is a law on $G$. Since automorphic words have the same image over a group $G$, we see that $w(G)=w'(G)$. Hence for a finite abelian group the only images of word maps are exactly the images of the power maps, e.g., $\{x^k:x \in G\}$ for some $k$. We now prove Theorem \ref{neg} showing that not every subset of a group $G$ that is closed under endomorphisms occurs as word map. 

\begin{proof}[\textbf{Proof of Theorem \ref{neg}.}] Let $G=\langle a| a^{12} \rangle $ be the cyclic group of order 12, then the images of the power maps in $G$ are exactly 
\[
1=\{x^{12}\}, G=\{x^1\}, \{1,a^2,a^4,a^6,a^8,a^{10}\}=\{x^2\},\{1,a^3,a^6,a^9\}=\{x^3\}\] \[ \{1,a^4,a^8\}= \{x^4\},\{1,a^6\}=\{x^6\}.
\]

Any union of subsets closed under endomorphisms is closed under endomorphisms. However, there is no power map, equivalently no word map, that has the set $\{1,a^2,a^3,a^4,a^6,a^8,a^9,a^{10}\}$ as its image.
\end{proof}

\bibliographystyle{amsalpha}
\bibliography{ref}

\newcommand{\etalchar}[1]{$^{#1}$}
\providecommand{\bysame}{\leavevmode\hbox to3em{\hrulefill}\thinspace}
\providecommand{\MR}{\relax\ifhmode\unskip\space\fi MR }
% \MRhref is called by the amsart/book/proc definition of \MR.
\providecommand{\MRhref}[2]{%
  \href{http://www.ams.org/mathscinet-getitem?mr=#1}{#2}
}
\providecommand{\href}[2]{#2}
\begin{thebibliography}{GLO{\etalchar{+}}18}

\bibitem[BCP97]{Magma}
Wieb Bosma, John Cannon, and Catherine Playoust, \emph{The {M}agma algebra
  system. {I}. {T}he user language}, J. Symbolic Comput. \textbf{24} (1997),
  no.~3-4, 235--265, Computational algebra and number theory (London, 1993).
  \MR{MR1484478}

\bibitem[CH18]{CH}
William Cocke and Meng-Che Ho, \emph{On the symmetry of images of word maps in
  groups}, Comm. Algebra \textbf{46} (2018), no.~2, 756--763. \MR{3764894}

\bibitem[CH19]{CH2}
\bysame, \emph{The probability distribution of word maps on finite groups}, J.
  Algebra \textbf{518} (2019), 440--452. \MR{3873947}

\bibitem[GK00]{KG}
Robert~M. Guralnick and William~M. Kantor, \emph{Probabilistic generation of
  finite simple groups}, J. Algebra \textbf{234} (2000), no.~2, 743--792,
  Special issue in honor of Helmut Wielandt. \MR{1800754}

\bibitem[GKP18]{Gordeev}
N.~L. Gordeev, B.~\`E. Kunyavski\u{\i}, and E.~B. Plotkin, \emph{Geometry of
  word equations in simple algebraic groups over special fields}, Uspekhi Mat.
  Nauk \textbf{73} (2018), no.~5(443), 3--52. \MR{3859398}

\bibitem[GLO{\etalchar{+}}18]{GLOST}
Robert~M. Guralnick, Martin~W. Liebeck, E.~A. O'Brien, Aner Shalev, and
  Pham~Huu Tiep, \emph{Surjective word maps and {B}urnside's {$p^aq^b$}
  theorem}, Invent. Math. \textbf{213} (2018), no.~2, 589--695. \MR{3827208}

\bibitem[Hal36]{Hall}
P.~Hall, \emph{The eulerian functions of a group}, The Quarterly Journal of
  Mathematics \textbf{os-7} (1936), no.~1, 134--151.

\bibitem[LOST10]{LOST}
Martin~W. Liebeck, E.~A. O'Brien, Aner Shalev, and Pham~Huu Tiep, \emph{The
  {O}re conjecture}, J. Eur. Math. Soc. (JEMS) \textbf{12} (2010), no.~4,
  939--1008. \MR{2654085}

\bibitem[Lub14]{Lubotzky}
Alexander Lubotzky, \emph{Images of word maps in finite simple groups}, Glasg.
  Math. J. \textbf{56} (2014), no.~2, 465--469. \MR{3187911}

\bibitem[Neu67]{Ne67}
Hanna Neumann, \emph{Varieties of groups}, Springer-Verlag New York, Inc., New
  York, 1967. \MR{0215899}

\end{thebibliography}

\end{document}